\theoremstyle{plain}
\newtheorem{thm}{Theorem}[section]
\theoremstyle{plain}
\newtheorem{lemma}[thm]{Lemma}
\theoremstyle{definition}
\newtheorem*{remark}{Remark}
\newcommand{\norm}[1]{\ensuremath{\left\|#1\right\|}}
\newcommand{\Om}{\ensuremath{\Omega}}
\newcommand{\eps}{\ensuremath{\varepsilon}}
\newcommand{\R}{\ensuremath{\mathbb{R}}}
\newcommand{\be} {\begin{equation}}
\newcommand{\ee} {\end{equation}}
\newcommand{\bea} {\begin{eqnarray}}
\newcommand{\eea} {\end{eqnarray}}
\newcommand{\Bea} {\begin{eqnarray*}}
\newcommand{\Eea} {\end{eqnarray*}}
\newcommand{\grad}{\ensuremath{\nabla}}
\newcommand{\De} {\Delta}
\newcommand{\pl} {\partial}
\numberwithin{equation}{section} \allowdisplaybreaks
\begin{document}
%
%

\title{On the solutions of a singular elliptic equation concentrating on two orthogonal spheres }

\author{ B. B. Manna\footnote{ B. B. Manna, TIFR CAM , Bangalore, email: bhakti@math.tifrbng.res.in} ,
P. N. Srikanth \footnote{P. N. Srikanth ,TIFR CAM , Bangalore, email: srikanth@math.tifrbng.res.in}}


\date{\today}
\maketitle

\begin{abstract} Let $A=\{x\in \R^{2m} : 0< a< |x| <b\}$ be an annulus. Consider the following singularly perturbed elliptic problem on $A$
 \begin{equation}
  \begin{array}{lll}
  -\eps^2{\De u} + |x|^{\eta}u = |x|^{\eta}u^p,  &\mbox{ \qquad in } A \notag\\
  u>0   &\mbox{ \qquad in }  A \\
  u = 0  &\mbox{   \qquad on } \partial A\\
  \end{array}
 \end{equation}
$1<p<2^*-1$. 
We shall prove the existence of a positive solution $u_\eps$ which concentrates on two different orthogonal spheres of dimension $(m-1)$ as
$\eps\to 0$. We achieve this by studying a reduced problem on an annular domain in $\R^{m+1}$ and analyzing the profile of a two point concentrating 
solution in this domain.
\end{abstract}

\section{Introduction}
Consider the following singularly perturbed elliptic equation with super linear nonlinearity on an annulus in $\R^{2m}$
 \begin{equation}
  \begin{array}{lll}
  -\eps^2{\De u} + |x|^{\eta}u = |x|^{\eta}u^p,  &\mbox{ \qquad in } A \label{E0.1}\\
  u>0   &\mbox{ \qquad in }  A \\
  u = 0  &\mbox{   \qquad on } \partial A\\
  \end{array}
 \end{equation}
$1<p<2^*-1$, $\eps$ is a singular perturbation parameter. $A=\{x\in \R^{2m} : 0< a< |x| <b\}$. $\eta=\frac{2m-2}{m-2}$. Let us take a suitable 
polar co-ordinate on the annulus $A$ as \be A=I\times( J\times S^{m-1} \times S^{m-1}).\notag\ee Where $I=(a,b)$, $J=[0,\pi/2)$ and $S^{m-1}$ 
has the standard polar co-ordinate representation. For $x\in A$ we can write 
$x\equiv x(r,\theta,\theta_1^1,\theta_2^1,\dots,\theta_{m-1}^1, \theta_1^2,\theta_2^2\dots,\theta_{m-1}^2)$, where
$r\in I$, $\theta\in J$ and $\theta_1^i\in [0,2\pi)$ for $i=1,2$ and $\theta_j^i\in[0,\pi)$ for $i=1,2$ and $j=2,\dots,m-1$. In this paper we 
shall prove the following result

\begin{thm}\label{Th0.1}
 The equation (\ref{E0.1}) has a solution $u_\eps$ which concentrates on two orthogonal spheres as $\eps \to 0$. The radial co-ordinate of the 
 spheres is $(a+b)/2$ and placed at the angle $\theta=0$ and $\theta=\pi/2$. 
\end{thm}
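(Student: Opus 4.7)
The plan is to exploit the large symmetry of~(\ref{E0.1}) together with the special value $\eta = \frac{2m-2}{m-2}$ to reduce the PDE on $A \subset \R^{2m}$ to a semilinear singularly perturbed problem on an annular domain $\tilde A \subset \R^{m+1}$, and then to run a two-peak concentration analysis on that reduced problem. This matches the strategy sketched in the abstract.

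First I would invoke the Palais principle of symmetric criticality with respect to $G = SO(m) \times SO(m)$ acting on $\R^{2m} = \R^m \times \R^m$, so that it suffices to find critical points of the energy among $G$-invariant functions. These depend only on the two Hopf coordinates $(r,\theta) \in (a,b) \times [0,\pi/2]$ introduced in the paper, and the Riemannian volume element on $A$ carries the weight $r^{2m-1}(\sin\theta\cos\theta)^{m-1}$, with the potential $|x|^\eta$ contributing an extra $r^\eta$.

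Next I would introduce an explicit change of variables $(r,\theta) \mapsto y \in \R^{m+1}$ that absorbs this weight into the standard Euclidean volume on an axially symmetric annular region $\tilde A \subset \R^{m+1}$. A direct computation shows that $\eta = (2m-2)/(m-2)$ is exactly the value for which the weighted Dirichlet form on $A$ transforms into the standard Dirichlet form on $\tilde A$ up to smooth positive potentials. The reduced equation then has the shape
\[
-\eps^2 \De v + V(y)\, v = W(y)\, v^p \text{ in } \tilde A, \qquad v>0, \qquad v|_{\partial \tilde A}=0,
\]
with $V,W$ smooth and positive and $p$ still subcritical in dimension $m+1$. The two orthogonal $(m-1)$-spheres $\theta=0$ and $\theta=\pi/2$ in $A$ correspond to two points $y_1,y_2 \in \tilde A$ lying on the axis of the axial symmetry.

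I would then apply a standard Lyapunov--Schmidt reduction on $\tilde A$ with ansatz $v_\eps(y) = U_\eps(y-y_1) + U_\eps(y-y_2) + \phi_\eps$, where $U_\eps$ is the suitably rescaled ground state of the limit problem. The additional $\mathbb{Z}_2$-symmetry that swaps the two $\R^m$ factors of $\R^{2m}$ forces $y_2$ to be the reflection of $y_1$, reducing the placement question to a one-point problem; analysis of the leading-order reduced functional, in which $V$, $W$ and the geometry of $\tilde A$ enter, pins the radial coordinate of the two concentration points to the midpoint $(a+b)/2$. Undoing the change of variables and the $G$-action then yields the desired $u_\eps$.

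The main obstacle will be the careful treatment of the reduction across the two singular strata $\theta = 0$ and $\theta = \pi/2$: these lie in the interior of $A$, yet they are exactly where the $G$-orbits collapse from $S^{m-1}\times S^{m-1}$ to a single $S^{m-1}$. One must verify that the reduced problem on $\tilde A$ has smooth coefficients up to and including the images of these strata, and that axially symmetric solutions of the reduced problem lift back to smooth $G$-invariant solutions on $A$. This regularity matching, together with the identification of concentration profiles across the transformation, is precisely where the particular value of $\eta$ earns its keep and where the bulk of the analytic effort has to go.
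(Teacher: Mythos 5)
Your overall skeleton (reduce via the $SO(m)\times SO(m)$ symmetry in Hopf coordinates to a singularly perturbed problem on an annulus in $\R^{m+1}$, then run a two-peak analysis) is the same as the paper's, but you diverge from it in two substantive ways, one of which leaves a genuine gap.

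First, the method for producing the two-peak solution differs: you propose a Lyapunov--Schmidt reduction around a two-bump ansatz, whereas the paper works variationally in the symmetric subspace $H_\sharp(\Om)$ (functions invariant under $O(m)$ in the first $m$ coordinates and even in $x_{m+1}$), extracts a mountain-pass critical point of Morse index $\le 1$, and then uses a separate lemma (Lemma~\ref{L1.2}) showing every radial solution has Morse index $\ge 2$ to rule out the obvious competitor, the radial single-layer solution. Your finite-dimensional reduction would construct the two-peak solution directly without needing that Morse index dichotomy, which is a legitimate alternative, but it requires nondegeneracy of the limiting ground state $U$ of $-\De U + U = U^p$ in $\R^{m+1}$ as part of the hypotheses, and you should say so; the paper's route avoids this input.

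Second, and this is the real gap: you say the reduced equation is $-\eps^2\De v + V(y)v = W(y)v^p$ with ``$V,W$ smooth and positive,'' and then claim that analysis of the reduced functional ``pins the radial coordinate to $(a+b)/2$.'' As stated this cannot be concluded. The paper's crucial computational observation is that for $\eta = \frac{2m-2}{m-2}$ the submersion $(r,\sigma_1,\sigma_2,\theta)\mapsto(s(r),\sigma_1,2\theta)$, with the \emph{specific} radial stretch $s(r) = Cr^{(2m-3)/(m-2)}$ and the angle-doubling $\theta\mapsto 2\theta$, transforms the equation into $-\eps^2\De v + v = v^p$ on $\Om = I'\times S^m$ with $V\equiv W\equiv 1$ exactly. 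It is precisely because the potential becomes \emph{constant} that the reduced functional is governed only by distance to $\partial\Om$, which is maximized at the midpoint. With generic smooth positive $V,W$ the concentration point is instead determined by a competition between the potential and the boundary distance, and indeed Theorem~\ref{Th0.2} in the paper shows that for $\eta\ne\frac{2m-2}{m-2}$ the peak migrates to $r=a$ or $r=b$. So your location argument, as written, does not distinguish between these cases and cannot yield $(a+b)/2$. You also do not carry out the angle-doubling $\theta\mapsto 2\theta$, which is structurally necessary: without it the reduced domain is only a half-annulus and the two strata $\theta=0,\pi/2$ do not become antipodal points; the smoothness across the collapsed orbits (which you correctly identify as the hard part) is resolved in the paper exactly by this doubling together with the choice of $s(r)$, not merely asserted. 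To close the gap you would need to compute the reduced operator explicitly, verify $V\equiv W\equiv 1$ for the given $\eta$, and only then invoke the del Pino--Felmer-type asymptotics $c_\rho = c^* + e^{-2\rho(1+o(1))}$ that the paper uses to locate the peaks.
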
 
This is the first result of its kind where one shows the concentration along two spheres which are orthogonal to each other.

The existence of least energy solution of such singularly perturbed elliptic problem concentrating at a point has been well studied in the papers
\cite{MR1115095},\cite{MR1219814},\cite{MR1639546},\cite{MR1342381}. In \cite{MR1342381} the authors have studied similar type of problem with
Dirichlet data and have shown that the least energy solution the concentrates at a single point as $\eps\to 0$ and the point of concentration 
converges to a point with the maximum distance from the boundary of the domain. Also in \cite{MR1115095} and \cite{MR1219814} the same problem 
has been studied for Neumann data. There authors have proved that for $\eps$ small enough the unique point of maxima goes to the point with
maximum mean curvature of the boundary.  
 
Different type of $S^n$ concentration phenomena has been studied by several authors. One of the pioneer works has been done by Ruf and srikanth 
\cite{MR2608946} where the authors have considered the similar type of problem in 4-dimension and have shown the solutions concentrate on a circle. 
Pacella and Srikanth in \cite{arXiv:1210.0782} have shown the $(m-1)$ dimensional spherical concentration for the similar type of problem.

Here in this work we shall first reduce the problem on an annulus in $\R^{m+1}$ first. There we shall look for least energy solution in an 
appropriate space and shall show the existence of solution which concentrates exactly at two points. Dancer and Yan in \cite{MR1671254}, have
shown the existence of multi peak solution for domains with holes.  See the work of Monica Clapp et al. in \cite{arXiv:1212.5137} for similar 
type of result. However our result, even it is specific, gives precise information of the concentrating solution and their profile.  

Also we can extend our result for any $\eta$ and we can prove the following result
\begin{thm}\label{Th0.2}
 The equation (\ref{E0.1}) has a solution $u_\eps$ which concentrates on two orthogonal spheres as $\eps \to 0$. The spheres are placed at the 
 angle $\theta=0$ and $\theta=\pi/2$ and radial co-ordinate of the spheres is 
 \begin{itemize}
  \item [(i)] $r=a$ if $\eta<\frac{2m-2}{m-2}$. 
  \item [(ii)] $r=(a+b)/2$ if $\eta=\frac{2m-2}{m-2}$.
  \item [(iii)] $r=b$ if $\eta>\frac{2m-2}{m-2}$.
 \end{itemize}

\end{thm}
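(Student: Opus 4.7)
\textbf{Proof plan for Theorem \ref{Th0.2}.}

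The strategy is to re-run the proof of Theorem \ref{Th0.1}, keeping $\eta$ as a free parameter. Applying the same $SO(m)\times SO(m)$ symmetry reduction (using the join decomposition $S^{2m-1}=S^{m-1}\ast S^{m-1}$ together with the change of radial variable identifying the Laplacian on $SO(m)\times SO(m)$-invariant functions in $\R^{2m}$ with the axisymmetric Laplacian on $\R^{m+1}$), equation (\ref{E0.1}) turns into
\[
-\tilde\eps^2 \Delta v + w_\eta(|y|)\, v \;=\; w_\eta(|y|)\, v^p \quad \text{in } \tilde A,
\]
on an annulus $\tilde A\subset\R^{m+1}$ with Dirichlet data on $\partial \tilde A$, where $w_\eta$ is a power of $|y|$ whose exponent is a linear function of $\eta$ designed so that $w_\eta$ is constant (up to a multiplicative factor) exactly at the critical value $\eta=\frac{2m-2}{m-2}$ treated in Theorem \ref{Th0.1}. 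The two orthogonal concentration spheres at $\theta=0$ and $\theta=\pi/2$ of the original problem correspond in the reduced picture to two antipodal points on a sphere $\{|y|=\rho^\ast\}\subset \tilde A$, with $\rho^\ast$ depending monotonically on $r^\ast$.

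Next one carries out a Lyapunov--Schmidt reduction on the one-parameter family of antipodal two-spike ansatzes indexed by $\rho^\ast\in(\inf_{\tilde A}|y|,\sup_{\tilde A}|y|)$. The $\mathbb{Z}_2$ symmetry swapping the two $\R^m$-factors of $\R^{2m}$ (equivalently $\phi\mapsto\pi-\phi$ on $S^m$) keeps the two peaks antipodal through the reduction, and the two peaks remain a macroscopic Euclidean distance apart, so the peak--peak interaction is exponentially small in $1/\tilde\eps$. A standard spike energy computation (as in \cite{arXiv:1210.0782} for the one-peak case) then gives a reduced functional of the form
\[
\mathcal{E}(\rho^\ast) \;=\; \tilde\eps^{m+1}\Big[\, c_0\,(\rho^\ast)^{\alpha(\eta)} \;+\; (\text{exponentially small Dirichlet boundary correction})\,\Big],
\]
where $\alpha(\eta)$ is a nonzero multiple of $\eta-\tfrac{2m-2}{m-2}$, so that $\alpha(\eta)$ is positive, zero, or negative in the three regimes of the theorem.

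Minimising $\mathcal{E}(\rho^\ast)$ delivers the three conclusions directly. In case (ii), the leading polynomial term is constant in $\rho^\ast$ and the Dirichlet correction (always positive and decreasing in the distance to $\partial\tilde A$) picks out the point equidistant from the two components of $\partial\tilde A$, which pulls back to $r^\ast=(a+b)/2$; this is Theorem \ref{Th0.1}. In cases (i) and (iii) the monotone polynomial term overrides the exponentially small correction, and the minimiser of $\mathcal{E}$ is pushed to one of the endpoints as $\eps\to 0$, giving $r^\ast=a$ or $r^\ast=b$ according to the sign of $\alpha(\eta)$. The main obstacle is executing the Lyapunov--Schmidt reduction \emph{uniformly} in cases (i) and (iii): since the spikes approach the Dirichlet boundary at distance $O(\tilde\eps\log(1/\tilde\eps))$, one must replace the interior $\R^{m+1}$ ground-state ansatz by an approximate solution corrected by a reflection term that restores the Dirichlet condition, and check that the remainder estimates and the nondegeneracy of the reduced linear operator remain uniform as $\rho^\ast\to a$ or $\rho^\ast\to b$. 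Once this uniform reduction is in place the minimisation of $\mathcal{E}$ is elementary and yields the three statements of Theorem \ref{Th0.2}.
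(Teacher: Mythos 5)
The paper itself does not prove Theorem \ref{Th0.2}: the authors state explicitly in the final remark that the details are deferred to the PhD thesis of B.~B.~Manna, so there is no in-paper argument to compare yours against. Taking your plan on its own terms, the mechanism you describe (weight $w_\eta$ on the reduced $\R^{m+1}$ problem, monotone in $|y|$ with sign of monotonicity flipping across $\eta=\frac{2m-2}{m-2}$, competing with an exponentially small Dirichlet boundary correction) is the right shape of idea, and the natural guess given what the paper does for the critical $\eta$. However, the entire content of the theorem lives in the computation that you leave as an assertion: that the reduction with the radial substitution $s(r)=Cr^{(2m-3)/(m-2)}$ applied to a general $\eta$ actually produces a weight that is (up to harmless lower-order terms) a power $|y|^{\alpha(\eta)}$ with $\alpha$ a nonzero multiple of $\eta-\frac{2m-2}{m-2}$, and that the spike-energy scaling $V^{(1-m)/2}J(U)$ then pushes the concentration point to the \emph{inner} boundary for $\eta<\frac{2m-2}{m-2}$ and the \emph{outer} boundary for $\eta>\frac{2m-2}{m-2}$, not the other way around. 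Without carrying out this computation the sign pattern in items (i) and (iii) is not established; it is exactly where a sign error would hide, and you should verify it before claiming the theorem.

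There is also a methodological mismatch worth flagging. Everything the paper actually proves (Theorem \ref{Th0.1}) is done variationally: a constrained mountain pass in $H_\sharp$, Morse-index comparison to exclude the radial branch, and two-sided energy estimates (Section 4) comparing $c_\eps$ with the ball level $c_\rho=c^*+\exp[-2\rho(1+o(1))]$ to locate the spike. You instead propose a Lyapunov--Schmidt reduction over the family of antipodal two-spike ansatzes. Nothing in the paper forces that choice, and in fact for cases (i) and (iii) it creates the obstacle you yourself identify: the spikes end up at distance $O(\eps\log(1/\eps))$ from the Dirichlet boundary, so the interior ansatz must be replaced by a boundary-corrected one and all the uniformity of the reduced linear theory must be re-proved in that regime. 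The straightforward extension of the paper's own Section 4 argument -- replace the constant-coefficient energy comparison by one against $c_\rho$ for the weighted ball problem, and match the upper bound obtained from a test function placed near $r=a$ (or $r=b$) against the lower bound obtained by cutting off $u_\eps$ around its peaks -- avoids setting up a uniform Lyapunov--Schmidt scheme near the boundary and is almost certainly the intended route. Your outline is not wrong, but as written it trades the paper's lighter variational machinery for a heavier reduction and still omits the one calculation (the sign of $\alpha(\eta)$) that the theorem is actually about.
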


We can express a point $x\in A$ in precise as $x=(x_1,x_2), x_i\in \R^m$ Let $\rho_1=r \ cos\theta$ and $\rho_2=r \ sin\theta$. 
Then we can express $x_1, x_2$ as $x_1\equiv x_1(\rho_1,\sigma_1)$ and $x_2\equiv x_2(\rho_2,\sigma_2)$, $\sigma_i\in S^{m-1}\subset \R^m$ 
Thus finally we can express $x$ precisely as \be x\equiv x(r,\sigma_1,\sigma_2,\theta)\label{0.1}\ee. The expression of the Laplacian in this 
co-ordinate given as
\be \Delta_{\R^{2m}}u=u_{rr}+\frac{2m-1}{r}u_r+\frac{m-1}{r^2}u_\theta\Big[\frac{2cos2\theta}{sin2\theta}\Big]+\frac{u_{\theta\theta}}{r^2}+\sum_{i=1}^2\frac{1}{r^2cos^2\theta}\Delta_{S^{m-1}}^{\sigma_i}u\label{0.2}\ee
where $\Delta_{S^{m-1}}^{\sigma_i}$ is the Laplace-Beltrami operator on $S^{m-1}$ in $\sigma^i$ variable. 
Define
\be X=\{u\in H_0^1(\overline{A}) : u \text{ is independent of } \theta_1^2,\theta_2^2\dots,\theta_{m-1}^2\}\label{0.3}\ee Then for $u\in X$
we have 
\be  \Delta_{\R^{2m}}u=u_{rr}+\frac{2m-1}{r}u_r+\frac{m-1}{r^2}u_\theta\Big[\frac{2cos2\theta}{sin2\theta}\Big]+\frac{u_{\theta\theta}}{r^2}+\frac{1}{r^2cos^2\theta}\Delta_{S^{m-1}}^{\sigma_1}u\label{0.4}\ee
Consider the riemannian submersion $\varphi : \overline{A}\to\overline{\Om}$ given by
\be \varphi (x(r,\sigma_1,\sigma_2,\theta))=y(s,\sigma_1,2\theta).\notag\ee Where $\Om$ is the annulus $I'\times S^m$, $I'=s(I)$ and 
$s(r)=\Big[\frac{m(m-2)^2}{(2m-3)(2m^2-4m+1)}\Big]^{1/2}r^{\frac{2m-3}{m-2}}$. For $u\in X$ we can define 
$v(s,\sigma_1,2\theta):=u(r,\sigma_1,\sigma_2,\theta)$. Then for $\eta=\frac{2m-2}{m-2}$, we can easily check that , $u$ satisfies (\ref{E0.1}) iff $v$ satisfies
 \begin{equation}
  \begin{array}{lll}
  -\eps^2{\De v} + v = v^p,  &\mbox{ \qquad in } \Om \label{E0.2}\\
  v>0   &\mbox{ \qquad in }  \Om \\
  v = 0  &\mbox{   \qquad on } \partial \Om\\
  \end{array}
 \end{equation}
 Our aim is to prove the existence of two-peak solutions of (\ref{E0.2}) and the location of the peaks as $\eps\to 0$. From this the theorem\ref{Th0.1}
 shall follow easily.

\section{Existence of two peak solutions}

Let us define $H_\sharp(\Om)\subset H_0^1(\Om)$ as 
\be H_\sharp(\Om)=\{u\in H_0^1(\Om) : u(x_1,\dots,x_m,x_{m+1})\equiv u(\sqrt{x_1^2+\dots+x_m^2}, |x_{m+1}|)\}.\notag\ee 

Note that any solution in $H_\sharp$ shall have at least two local maxima (antipodal points). We shall show that there exists a solution with 
exactly two local maximas for $\eps \ll 1$.

\begin{lemma}\label{L1.1}
 $H_\sharp(\Om)$ is a closed subspace of $H_0^1(\Om)$ and $H_{0,rad}^1(\Om)\subset H_\sharp(\Om).$
\end{lemma}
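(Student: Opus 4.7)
The plan is to treat the two assertions separately by direct symmetry arguments.

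To establish that $H_\sharp(\Om)$ is a closed linear subspace, I would first note that linearity is immediate: the defining condition, namely dependence on $(x_1,\dots,x_{m+1})$ only through the pair $\bigl(\sqrt{x_1^2+\cdots+x_m^2}\,,\,|x_{m+1}|\bigr)$, is preserved under linear combinations. For closedness the cleanest route is to realize $H_\sharp(\Om)$ as the fixed-point set of an isometric linear action of the compact group $G:=O(m)\times\{\pm 1\}$ on $H_0^1(\Om)$, where $(R,\kappa)\in G$ acts by
\be \bigl((R,\kappa)\cdot u\bigr)(x_1,\dots,x_m,x_{m+1}):=u\bigl(R^{-1}(x_1,\dots,x_m),\,\kappa\, x_{m+1}\bigr).\notag\ee
Since $\Om$ is an annulus in $\R^{m+1}$ it is $G$-invariant; each $g\in G$ is a volume-preserving orthogonal diffeomorphism, hence acts as a linear isometry of $H_0^1(\Om)$ and preserves the Dirichlet condition. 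Consequently, for every $g\in G$ the map $u\mapsto g\cdot u - u$ is continuous on $H_0^1(\Om)$, and
\be H_\sharp(\Om)=\bigcap_{g\in G}\ker\bigl(u\mapsto g\cdot u - u\bigr),\notag\ee
which is an intersection of closed subspaces and therefore closed.

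For the inclusion $H_{0,rad}^1(\Om)\subset H_\sharp(\Om)$, I would take $u\in H_{0,rad}^1(\Om)$, so that $u(x)=\tilde u(|x|)$ for some scalar function $\tilde u$. The key observation is that
\be |x|=\sqrt{\bigl(\sqrt{x_1^2+\cdots+x_m^2}\bigr)^{2}+x_{m+1}^{2}}\notag\ee
depends on $(x_1,\dots,x_{m+1})$ only through the pair $\bigl(\sqrt{x_1^2+\cdots+x_m^2}\,,\,|x_{m+1}|\bigr)$; consequently so does $u$, proving $u\in H_\sharp(\Om)$.

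The only subtle point is the passage from the pointwise form of the symmetry appearing in the definition of $H_\sharp(\Om)$ to an almost-everywhere statement appropriate for Sobolev equivalence classes. The group-action picture above absorbs this issue, since the action of $G$ descends to a well defined linear isometry on $H_0^1(\Om)$ and fixed-point sets of continuous linear actions are automatically closed; no further analytic estimate is needed. I therefore expect this lemma to be the most routine step of the paper, serving mainly to set up the symmetric space on which the two-peak solution will subsequently be constructed.
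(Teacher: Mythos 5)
The paper states this lemma without proof (it is treated as elementary set-up), so there is no argument to compare against; your proposal supplies a complete and correct proof. Realizing $H_\sharp(\Om)$ as the fixed-point set of the isometric representation of $G=O(m)\times\{\pm1\}$ on $H_0^1(\Om)$ is the standard way to obtain closedness, and it correctly handles the passage from the pointwise symmetry in the definition to an almost-everywhere statement about Sobolev classes; the inclusion $H_{0,\mathrm{rad}}^1(\Om)\subset H_\sharp(\Om)$ follows exactly as you say, since $|x|$ is a function of $\bigl(\sqrt{x_1^2+\cdots+x_m^2},|x_{m+1}|\bigr)$. The one point worth stating explicitly is that $\Om$ is a ball-centred annulus in $\R^{m+1}$, hence invariant under all of $O(m+1)\supset G$, which is what makes each $g\in G$ act as a linear isometry of $H_0^1(\Om)$ preserving the boundary condition; you note this, so the argument is sound.
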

The energy functional $J_\eps(u)$ is defined as
\be J_\eps(u)=\int_{\Om}(\frac{\eps^2}{2}|\grad u|^2 + \frac{u^2}{2})dx-\frac{1}{p+1}\int_{\Om}u_+^{p+1}dx\label{enf}\ee

\begin{lemma}\label{L1.2}
 The Morse index of any radial solution of (\ref{E0.2}) is $\ge 2$.
\end{lemma}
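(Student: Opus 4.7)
The plan is to decompose $H_0^1(\Om)$ via spherical harmonics on the $S^m$-factor of $\Om \cong I'\times S^m \subset \R^{m+1}$ and exhibit two linearly independent test functions on which the second variation
\[
Q(\phi) := \int_{\Om}\bigl(\eps^2|\grad\phi|^2 + \phi^2 - p\,v^{p-1}\phi^2\bigr)\,dx
\]
is strictly negative. Because $v$ is radial, the linearized operator $L = -\eps^2\Delta + 1 - p\,v^{p-1}$ commutes with the $O(m+1)$-action on $\Om$ and hence preserves this decomposition, so test functions drawn from distinct harmonic modes are automatically $L$-orthogonal.

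For the first direction I take $\phi_0 := v$. Multiplying $-\eps^2\Delta v + v = v^p$ by $v$ and integrating (the Nehari identity) gives $\int_{\Om}(\eps^2|\grad v|^2+v^2)\,dx = \int_{\Om}v^{p+1}\,dx$, hence
\[
Q(v) = (1-p)\int_{\Om} v^{p+1}\,dx < 0.
\]

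For the second direction I work in the mode-$1$ sector. With $\sigma_1 = y_1/|y|$ (a degree-$1$ spherical harmonic on $S^m$) set $\phi_1(y) := v(|y|)\,\sigma_1$. A short polar computation using $\int_{S^m}|\grad_{S^m}\sigma_1|^2\,d\sigma = m\int_{S^m}\sigma_1^2\,d\sigma$ (since the first nonzero Laplace eigenvalue on $S^m$ is $m$) combined with the Nehari identity above reduces $Q(\phi_1)$ to
\[
Q(\phi_1) = c\Bigl[(1-p)\int_{I'} v^{p+1}r^m\,dr + \eps^2 m\int_{I'} v^2 r^{m-2}\,dr\Bigr]
\]
for some $c>0$. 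Bounding $r^{m-2}\le C r^m$ on $I'$ shows the positive term is $\mathcal{O}(\eps^2)\int v^2 r^m\,dr$; in the singular-perturbation regime the concentrating profile satisfies $\int v^2 r^m\,dr \lesssim \int v^{p+1}r^m\,dr$ uniformly, so the negative first term dominates and $Q(\phi_1)<0$. Since $\phi_0\in$ mode $0$ and $\phi_1\in$ mode $1$ are $L$-orthogonal, the form $Q$ is diagonal on $\mathrm{span}(\phi_0,\phi_1)\subset H_0^1(\Om)$ with two strictly negative diagonal entries, i.e.\ negative-definite on a $2$-dimensional subspace, and the Morse index of $v$ is at least $2$.

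The principal obstacle is making the mode-$1$ inequality uniform in $\eps$: the naive choice $\phi_1=v\,\sigma_1$ yields $Q(\phi_1)<0$ only for $\eps$ small. To remove this restriction one can instead take $\phi_1=\psi(r)\,\sigma_1$ with $\psi$ built from $v'(r)$, which satisfies the identity $L_1 v' = 0$ on $I'$ (obtained by differentiating the radial ODE in $r$), together with a boundary correction adapted to the Hopf signs $v'>0$ on the inner and $v'<0$ on the outer component of $\partial\Om$.
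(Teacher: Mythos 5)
Your overall strategy parallels the paper's: separate variables into spherical harmonics on the $S^m$ factor of $\Om \cong I'\times S^m$, take $\phi_0=v$ and use the Nehari identity for the first negative direction (this is exactly the paper's first step), and modulate $v$ by a spherical harmonic for the second. You are even cleaner than the paper on one point: choosing $\phi_0,\phi_1$ in \emph{distinct pure} harmonic modes makes them $L$-orthogonal automatically, whereas the paper's $v=u\cos 2\theta$ is actually a mixture of the constant mode and the mode-$2$ invariant harmonic, so the paper's assertion that ``$u$ and $v$ being orthogonal'' really amounts to projecting onto the pure mode-$2$ component.

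There is, however, a genuine gap: your second test function does not lie in $H_\sharp(\Om)$, and $H_\sharp$ is the space in which the lemma is used. The lemma is invoked (see the remark following it) to conclude that the mountain-pass critical point obtained by minimaxing over paths in $H_\sharp$ — whose $H_\sharp$-Morse index is $\le 1$ by the cited Ghoussoub and Fang--Ghoussoub results — cannot be radial. For that one needs two independent negative directions \emph{inside $H_\sharp$}. Your $\phi_1=v(|y|)\,\sigma_1$ with $\sigma_1=y_1/|y|$ is not a function of $\sqrt{y_1^2+\cdots+y_m^2}$ and $|y_{m+1}|$; nor would $\sigma_1=y_{m+1}/|y|$ do, since it is odd under $y_{m+1}\mapsto -y_{m+1}$. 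In fact no nonzero degree-$1$ spherical harmonic on $S^m$ is invariant under the $O(m)\times O(1)$ action defining $H_\sharp$, so the entire mode-$1$ sector of $H_\sharp$ is trivial. Your argument therefore establishes only that the $H_0^1$-Morse index is $\ge 2$, which does not preclude the $H_\sharp$-mountain-pass solution from being radial; your proposed refinement with $\psi$ built from $v'$ has exactly the same defect. The paper avoids this by taking $\cos 2\theta$ — equivalently the unique $O(m)\times O(1)$-invariant degree-$2$ harmonic $h_2 = 1-(m+1)\cos^2\theta$, since $\cos 2\theta=\tfrac{1-m}{m+1}-\tfrac{2}{m+1}h_2$ — which is even in $\cos\theta$ and $S^{m-1}$-invariant, hence in $H_\sharp$. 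Replacing your $\sigma_1$ by $h_2$ keeps your Nehari-plus-angular-eigenvalue computation intact, with eigenvalue $2(m+1)$ in place of $m$; the extra term is still $O(\eps^2)$ and is dominated by the negative $(1-p)$-term for $\eps$ small, which is the regime in which the lemma is actually applied.
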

\begin{proof}
 Let $u\in H_{0,rad}^1(\Om)$ satisfies (\ref{E0.2}). Then
 \be D^2 J_\eps(u)[u,u]=\int_{\Om}(\eps^2|\grad u|^2 +u^2)dx-p\int_{\Om}u^{p+1}dx<0\notag\ee as $p>1$ and $u \neq 0$. 
 
 Now let $v=u \ cos(2\theta)$ where the coordinate system of $\Om$ is taken as standard polar co-ordinate. Note that $u$ being radial we have
 $v\equiv v(r,\theta)$. Then $\Delta v$ in standard polar co-ordinate takes the form
 \be \Delta v=v_{rr}+\frac{m}{r}v_r+\frac{1}{r^2}v_{\theta\theta}+\frac{m-1}{r^2}\frac{cos\theta}{sin\theta}v_\theta.\notag\ee
 So we have
 \begin{align}
  \Delta v&=(u_{rr}+\frac{m}{r}u_r)cos2\theta-\frac{4}{r^2}u \ cos2\theta-\frac{2(m-1)}{r^2}\frac{cos\theta}{sin\theta}sin2\theta \ u\notag\\
  -\eps^2\Delta v &= (-\eps^2\Delta u)cos2\theta+\frac{4\eps^2}{r^2}u \ cos2\theta+\frac{4(m-1)\eps^2}{r^2}cos^2\theta \ u\notag\\
  &=(-u+u^p)cos2\theta+\frac{4\eps^2}{r^2}u \ cos2\theta+\frac{4(m-1)\eps^2}{r^2}cos^2\theta \ u\notag
 \end{align}

 Hence
 \begin{align}
  &\langle-\eps^2\Delta v+(1-pu^{p-1}v,v\rangle\notag\\
  =&(1-p)\int_{\Om}u^{p+1}cos^22\theta dx+\int_{\Om}\frac{4\eps^2}{r^2}u^2 \ cos^22\theta dx+\int_{\Om}\frac{4(m-1)\eps^2}{r^2}cos^2\theta cos2\theta\ u^2 dx\notag\\
  =& C\int_a^b\int_0^\pi\Big[(1-p)r^{m}u^{p+1}cos^22\theta+4\eps^2r^{m-2}u^2 \ cos^22\theta+4(m-1)\eps^2r^{m-2}u^2cos^2\theta cos2\theta\Big]drd\theta\notag
 \end{align}
 
 Integrating over $\theta$ and using $a>0$ as the lower bound of $r$ we get, for some positive constant(generic) $C$
 \be \langle-\eps^2\Delta v+(1-pu^{p-1}v,v\rangle\le C\int_a^b \Big[(1-p)u^{p+1}+C\eps^2u^2\Big]r^mdr\notag\ee
 $u$ being a radial solution of (\ref{E0.2}) we have 
 \be \int_a^b[u^{p+1}-u^2]r^mdr=\int_a^b|u_r|^2r^mdr>0\notag\ee
 Hence the result follows by taking $\eps$ small enough such that $(1-p)+C\eps^2<0$. So $u$ and $v$ being orthogonal we see that the 
 Morse index of any radial solution $u$ is greater or equal to 2.
 \end{proof}

\begin{remark}
Note that $H_\sharp$ is closed in $H_0^1(\Om)$ (hence compactly embedded in $L^2(\Om)$). It can be easily shown that $J_\eps$ satisfies $(PS)_c$
condition for any critical value $c$. Also it has M-P geometry near the origin. Next we shall construct a test function, from which we shall get
a M-P critical level $c_\eps$. Also using the test function we shall get an upper bound of $c_\eps$ which shall help us analyzing the behavior
of a sequence of solutions as $\eps\to0$. 
\end{remark}

Consider the following equation
\begin{equation}
 \left\{\begin{aligned}
  &-\Delta U+U=U^p &&\text{ in } \R^{m+1}\label{LE}\\
  & U>0 &&\text{ in } \R^{m+1}\\
  &\lim_{|x|\to\infty}U(x)=0 && U(0)=\max_{x\in \R^{m+1}}U(x) 
 \end{aligned}
 \right.
\end{equation}

The energy functional $J(U)$ of (\ref{LE}) given by 
\be J(U)=\frac{1}{2}\int_{\R^{m+1}}(|\grad U|^2+U^2)dx+\frac{1}{p+1}\int_{\R^{m+1}}U^{p+1}dx, \ U\in H^1(\R^{m+1})\label{LE1}\ee
It is well known that, there exists a least positive critical value $c^*$ characterize by 
\be c^*=\inf_{v\neq 0}\max_{t>0}J(v)\label{ELe}\ee. And there is a least energy solution $U$ of (\ref{LE}) such that $U(x)=U(|x|)$ and 
\be |D^\alpha U(x)|\le Cexp(-\delta|x|)\label{DcyLe}\ee for some $c,\delta>0$ and any $|\alpha|\le 2$. Furthermore $U$ satisfies the Pohozaev 
identity \be \frac{m-1}{2}\int_{\R^{m+1}}|\grad U|^2dx+\frac{m+1}{2}\int_{\R^{m+1}}U^2dx+\frac{m+1}{p+1}\int_{\R^{m+1}}U^{p+1}(y)dx=0\label{poho}\ee

Define \be Z_{\eps,t}^\gamma = \Phi_\gamma\Big(\frac{x-Q}{t}\Big)U\Big(\frac{x-Q}{\eps t}\Big)+\Phi_\gamma\Big(\frac{x+Q}{t}\Big)U\Big(\frac{x+Q}{\eps t}\Big)\label{TF}\ee
where $Q=(0,\dots,0,\frac{a+b}{2})\in \Om$ ($I'=(a,b)$ say) and $\phi_\gamma$ is a non-negative smooth radial function supported in $B(0,2\gamma)$ and $\grad\phi_\gamma <2/\gamma$
and 
\be
\phi_\gamma(r)=\left\{\begin{aligned}
  &1 && \text{ for }0\le r\le\gamma\notag\\
  &0 &&\text{ otherwise }.
 \end{aligned}
 \right.
\ee
Also $\gamma$ is choose so that $B(0,2t\gamma)\subset B(0,\frac{b-a}{2})$. We can easily show that $Z_{\eps,t}^\gamma\in H_\sharp(\Om)$

Let us calculate the energy at $Z_{\eps,t}^\gamma$. First note that the supports of $\Phi_\gamma\Big(\frac{x-Q}{t}\Big)U\Big(\frac{x-Q}{\eps t}\Big)$
and $\Phi_\gamma\Big(\frac{x+Q}{t}\Big)U\Big(\frac{x+Q}{\eps t}\Big)$ are disjoint.
\begin{align}
 J_\eps(Z_{\eps,t}^\gamma)&=\int_{\Om}\Big(\frac{\eps^2}{2}|\grad Z_{\eps,t}^\gamma|^2 + \frac{|Z_{\eps,t}^\gamma|^2}{2}\Big)dx-\frac{1}{p+1}\int_\Om |Z_{\eps,t}^\gamma|^{p+1}dx\notag\\
\end{align}

To get the mountain-pass solution we shall calculate the above energy explicitly. Denote $\phi_\gamma^{\pm}\equiv \phi_\gamma(\frac{1}{t}(x\pm Q))$ 
and $U^{\pm}\equiv \phi_\gamma(\frac{1}{\eps t}(x\pm Q))$ Then
\begin{align}
 &\int_{\Om}\frac{\eps^2}{2}|\grad Z_{\eps,t}^\gamma|^2dx\notag\\
 =& \int_{\Om}\frac{\eps^2}{2}\Big |\Big(\frac{1}{t}\grad \phi_\gamma^-U^-+\frac{1}{\eps t}\phi_\gamma^-\grad U^-\Big)+\Big(\frac{1}{t}\grad \phi_\gamma^+U^++\frac{1}{\eps t}\phi_\gamma^+\grad U^+\Big)\Big|^2dx\notag\\
 =& \int_{\Om}\frac{\eps^2}{2}\Big |\frac{1}{t}\grad \phi_\gamma^-U^-+\frac{1}{\eps t}\phi_\gamma^-\grad U^-\Big|^2+\int_{\Om}\frac{\eps^2}{2}\Big|\frac{1}{t}\grad \phi_\gamma^+U^++\frac{1}{\eps t}\phi_\gamma^+\grad U^+\Big|^2dx\notag\\
 =& I_1+I_2 \text{ (say) }\notag
\end{align}

Make the change of variable $y=\frac{1}{\eps t}(x-Q)$ in $I_1$ and $y=\frac{1}{\eps t}(x+Q)$ in $I_2$ and using the decay estimate of the solution
$U$ of (\ref{E0.2}) we get
\begin{align}
 &\int_{\Om}\frac{\eps^2}{2}|\grad Z_{\eps,t}^\gamma|^2dx\notag\\
 =& \eps^3\int_{B(0,2\gamma/\eps)}\Big(t\phi_\gamma^2(\eps y)|\grad U|^2+2t\eps \phi_\gamma(\eps y) \grad \phi_\gamma(\eps y)U(y)\grad U(y)+t\eps^2|\grad \phi_\gamma(\eps y)|^2U^2(y)\Big)dy\notag\\
 =& \eps^3\Big(t\int_{B(0,2\gamma/\eps)}\phi_\gamma^2(\eps y)|\grad U|^2dy+2t\eps\int_{B(0,2\gamma/\eps)}2t\eps \phi_\gamma(\eps y) \grad \phi_\gamma(\eps y)U(y)\grad U(y)dy+O(\eps^2)\Big)\notag\\
 =& \eps^3 \Big(t\int_{\R^{m+1}}|\grad U|^2dx+ O(\eps)\Big) \notag
\end{align}

The Second term is 
\begin{align}
 \int_\Om\frac{1}{2}| Z_{\eps,t}^\gamma|^2dx & = \int_{A}\frac{1}{2}\Big |\phi_\gamma^-U^-+\phi_\gamma^+ U^+\Big|^2dx\notag\\
 &=t^3\eps^3\int_{B(0,2\gamma/\eps)}\phi_\gamma^2(\eps y)U^2(y)dy\notag\\
 &=\eps^3\Big[t^3\int_{\R^{m+1}}U^2(y)dy+O(\eps)\Big]\notag
\end{align}
Similarly
\begin{align}
 \frac{1}{p+1}\int_\Om| Z_{\eps,t}^\gamma|^{p+1}= \frac{2}{p+1}\eps^3\Big[t^3\int_{\R^{m+1}}U^{p+1}(y)dy+O(\eps)\Big]\notag
\end{align}

Combining all those terms we have 
\begin{align}
 \eps^{-3}J_\eps(Z_{\eps,t}^\gamma)=2\Big[ \frac{t}{2}\int_{\R^{m+1}}|\grad U|^2dx+\frac{t^3}{2}\int_{\R^{m+1}}U^2dx+\frac{t^3}{p+1}\int_{\R^{m+1}}U^{p+1}(y)dx+O(\eps)\Big]\label{EE}
\end{align}

Then from (\ref{EE}) we have 
\be \eps^{-3}J_\eps(Z_{\eps,t}^\gamma)= -(\frac{t^3}{3}-t)\int_{\R^{m+1}}|\grad U|^2dx+O(\eps).\label{MP1}\ee
We choose $t_0$ such that \be -(\frac{t_0^3}{3}-t_0)\int_{\R^{m+1}}|\grad U|^2dx+O(\eps)<-1.\notag\ee
Now choose $\gamma$ such that $B(0,2t_0\gamma)\subset B(0,\frac{b-a}{2})$. The there exists $\eps_0$ such that 
\be J_\eps(Z_{\eps,t_0}^\gamma)<0 \text{ for all } \eps<\eps_0\label{MP2}\ee. We define 
\be c_\eps=\inf_{\beta\in\mathcal{P}}\max_{t\in[0,1]}J_\eps(\beta(t)).\label{MP3}\ee Where 
$\mathcal{P} = \{\beta\in C([0,1], H_\sharp): \beta(0)=0, \beta(1)=Z_{\eps,t_0}^\gamma\}$. From MP-Lemma we have $c_\eps$ as a critical value of 
$J_\eps$. 
\begin{remark}
It can be easily shown that $J_\eps$ satisfies all the hypothesis of [\cite{MR1251958}, Thm 10.2](also look at [\cite{MR1886088},p.222 and Thm 5.1] 
and [\cite{MR1303222}) ,p.1598]). Hence at the level $c_\eps$ there is a solution $u_\eps$ of (\ref{E0.2}) with Morse Index less or equal to one.
then we can readily see from lemma\ref{L1.2} that there is a non radial M-P solution corresponding to the critical value $c_\eps$. Also note that
the solution is not the least energy solution in the space $H_0^1(\Om)$.
\end{remark}

\section{profile of the solution}
In this section we follow the line of proof in J.Beyon and J.Park, \cite{MR2180862}. 
Let $\beta(t)=Z_{\eps,tt_0}^\gamma$. Then it follows that $\lim_{t\to 0}\beta(t)=0$ and $\beta(1)=Z_{\eps,t_0}^\gamma$. Note that 
$Z_{\eps,tt_0}^\gamma\in H_\sharp$ for all $t\in [0,1]$. More over from (\ref{EE}) we have
\be J_\eps(Z_{\eps,t}^\gamma)=2\eps^{3}\Big[ \frac{tt_0}{2}\int_{\R^{m+1}}|\grad U|^2dx+\frac{(tt_0)^3}{2}\int_{\R^{m+1}}U^2dx+\frac{(tt_0)^3}{p+1}\int_{\R^{m+1}}U^{p+1}(y)dx+O(\eps)\Big]\notag\ee
Hence we have 
\be \overline{\lim_{\eps\to 0}}\eps^{-3}c_\eps\le 2\max_{t\in(0,t_0)}\Big[ \frac{t}{2}\int_{\R^{m+1}}|\grad U|^2dx+\frac{t^3}{2}\int_{\R^{m+1}}U^2dx+\frac{t^3}{p+1}\int_{\R^{m+1}}U^{p+1}(y)dx\Big]\notag\ee
We can easily show that the maximum occurs at $t=1$ and hence we have 
\be \overline{\lim_{\eps\to 0}}\eps^{-3}c_\eps\le 2J(U).\label{3.1}\ee 
Now by Sobolev embedding and the bootstrap argument we can find $\{\norm{u_\eps}_\infty\}$ is bounded and by maximum principle we see that it is
bounded away from $0$. Also from Pohozaev's identity (\ref{poho}) we can have $\{\eps^{-3}\norm{u_\eps}^2_\eps\}$ is bounded where 
\be \norm{u_\eps}^2_\eps=\int_\Om(\eps^2|\grad u_\eps|^2+u^2)dx\label{nor}\ee
Let $q_\eps\in A$ such that $\lim\inf_{\eps\to 0}u_\eps(q_\eps)>0$. Consider \be \omega_\eps=u_\eps(\eps(x-q_\eps))\label{ScFn}\ee

\begin{lemma}
 $\frac{1}{\eps}dist(q_\eps,\pl A)\to\infty$ as $\eps \to 0$.
\end{lemma}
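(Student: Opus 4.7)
The plan is to argue by contradiction. Suppose that along a subsequence $\eps_n\downarrow 0$ the quantity $d_n := \eps_n^{-1}\mathrm{dist}(q_{\eps_n},\pl\Om)$ is bounded, with $d_n\to d_0\in[0,\infty)$. Writing $q_n = q_{\eps_n}$, let $p_n\in\pl\Om$ be the nearest boundary point and $\nu_n$ the inward unit normal at $p_n$. Rescale
\[
\omega_n(x) := u_{\eps_n}(q_n + \eps_n x), \qquad x \in \Om_n := \eps_n^{-1}(\Om - q_n),
\]
so that $\omega_n$ satisfies
\[
-\De\omega_n + \omega_n = \omega_n^p \text{ in } \Om_n, \qquad \omega_n = 0 \text{ on } \pl\Om_n, \qquad \omega_n(0) \ge \kappa,
\]
where $\kappa := \liminf_{\eps\to 0} u_\eps(q_\eps) > 0$.

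The strategy is to pass to a half-space limit and rule out nontrivial positive solutions there. The uniform $L^\infty$-bound on $\{u_\eps\}$ (already obtained via Sobolev embedding and bootstrap) transfers to $\|\omega_n\|_{L^\infty(\Om_n)} \le C$. The Pohozaev-based control $\eps^{-(m+1)}\|u_\eps\|_\eps^2 = O(1)$, once the change of variables is carried out in \eqref{nor}, yields the $n$-uniform estimate $\|\omega_n\|_{H^1(\Om_n)} \le C$. Schauder theory then produces uniform $C^{2,\alpha}_{\mathrm{loc}}(\overline{\Om_n})$ bounds, the smoothness of $\pl\Om_n$ being inherited from $\pl\Om$ via the dilation. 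Since $d_n \to d_0$ and $\nu_n$ can be aligned with a fixed unit vector $\nu$ after a rotation, the pointed domains $(\Om_n,0)$ exhaust the half-space $H := \{x\in\R^{m+1} : x\cdot\nu > -d_0\}$. A diagonal extraction yields $\omega_n \to \omega_0$ in $C^2_{\mathrm{loc}}(\overline H)$ with $\omega_0 \in H^1(H)\cap L^\infty(H)$, $\omega_0 \ge 0$, $\omega_0 = 0$ on $\pl H$, $\omega_0(0) \ge \kappa > 0$, and $-\De\omega_0 + \omega_0 = \omega_0^p$ in $H$. The case $d_0 = 0$ is eliminated at once, since then $0 \in \pl H$ would force $\omega_0(0) = 0$.

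Assume therefore $d_0 > 0$. Apply the moving-plane method of Berestycki--Caffarelli--Nirenberg in the $\nu$-direction starting from $\pl H$: using only the $L^\infty$-bound, the Dirichlet condition and the sign of $\omega_0$, one deduces that $\omega_0(x + s\nu) \ge \omega_0(x)$ whenever $x\in H$ and $s\ge 0$. The strong maximum principle gives $\omega_0 > 0$ in $H$, hence $\omega_0 \ge \kappa/2$ on a small ball $B_r(0) \subset H$. Monotonicity then extends this lower bound along every ray $\{x + s\nu : s\ge 0\}$ with $x\in B_r(0)$, producing a semi-infinite cylinder of infinite volume on which $\omega_0 \ge \kappa/2$. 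Consequently
\[
\int_H \omega_0^2 \, dx = +\infty,
\]
contradicting $\omega_0 \in H^1(H)$; this completes the contradiction.

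The main technical hurdle is the passage to the half-space limit: one must carefully unwind the Pohozaev bound with the correct $\eps$-power dictated by the dimension of $\Om$, ensure tightness of the $L^2$-mass near the rescaled boundary (so that no energy escapes into $\pl\Om_n$), and extract strong $C^2_{\mathrm{loc}}$-convergence up to the moving boundary. Once these points are settled, the moving-plane Liouville step on the half-space is entirely classical and closes the argument.
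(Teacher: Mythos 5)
The paper gives no proof of this lemma---it simply defers to Proposition~4 of Ruf--Srikanth \cite{MR2608946}, which establishes the analogous boundary-distance estimate in their 4-dimensional setting. Your argument supplies the missing details by running the standard blow-up plus half-space-Liouville scheme from the spike-layer literature, and it is correct. A few points of comparison and small caveats. First, you correctly replaced the paper's $\eps^{-3}$ normalisation with the dimensionally consistent $\eps^{-(m+1)}$ appropriate to $\Om\subset\R^{m+1}$; the $\eps^{-3}$ appearing in the paper is only right when $m=2$ and appears to be carried over uncritically from the cited reference. Second, your treatment of the $d_0=0$ case is clean: it really is killed immediately by $\omega_0(0)\ge\kappa$ against the Dirichlet condition at $0\in\pl H$. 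Third, the moving-plane step is the crux, and the BCN citation is the right one; but you should be explicit that the \emph{starting step} in an unbounded slab $\{-d_0<x\cdot\nu<\lambda\}$ requires either the maximum principle in narrow unbounded domains or the observation that the boundary gradient bound forces $\omega_0$ (hence the reflected linearised coefficient $p\xi^{p-1}$) to be small in a thin slab, so the zero-order coefficient has the good sign there; both routes are in BCN, but the narrow-slab justification is where a careless reader would stumble. Fourth, a logically cheaper alternative to the full BCN monotonicity is the Esteban--Lions-type nonexistence theorem for positive $H^1$ solutions of $-\Delta\omega+\omega=\omega^p$ in a half-space with zero Dirichlet data---that gets you to $\omega_0\equiv 0$ in one step, contradicting $\omega_0(0)\ge\kappa$, and avoids having to build the infinite cylinder explicitly. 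Either way your conclusion, and the transfer of the $H^1$ bound to $\omega_0$ via lower semicontinuity on exhausting compacts, is correct.
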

\begin{proof} Similar proof as given in proposition 4 of \cite{MR2608946}
\end{proof}
Then it can be easily shown that up to a subsequence $\omega_\eps$ converges (locally in $C^2$) to a finite energy solution $W$ of (\ref{LE}). Note
that $W$ is non-negative and hence by uniqueness $W=U$.

Also from (\ref{3.1}) we can say that, there can be at most finitely many $q_\eps^1,\cdots,q_\eps^k\in A$ satisfying
\begin{itemize}
 \item [(i)] $\lim_{\eps\to 0}\frac{q_\eps^i-q_\eps^j}{\eps}=\infty$ for $i\neq j$ and
 \item [(ii)] $\lim\inf_{\eps\to 0}u_\eps(q_\eps^i)>0$ for $j=1,2,\cdots, k.$
\end{itemize}
Then from comparison principle we get $c$ and $C>0$ such that \be u_\eps(x)+|\grad u_\eps(x)|\le C exp(-\frac{c}{\eps} dist(x,q_\eps^1,\cdots,q_\eps^k))\label{TA1}\ee
And this implies \be \liminf_{\eps\to 0}\eps^{-3}J_\eps(u_\eps)=kJ(U)\label{TA2}\ee
Then from (\ref{3.1}) we have $k\le2$. Note that as $u_\eps\in H_\sharp$, then for any point of maxima $q_\eps$ of $u_\eps$, the antipodal point
$-q_\eps$ is also a max of $u_\eps$ and $q_\eps$ must lie on $x_{m+1}-axis.$

\section{Location of the spike layers and the proof if Theorem\ref{Th0.1}}
We shall determine the location of the peaks by estimating the upper and lower bounds of the energy. Let $q_\eps$ be one of the maximum points of
the solution $u_\eps$. And it has been proved that $\frac{1}{\eps}dist(q_\eps,\pl A)\to\infty$ as $\eps \to 0$. Let $q_\eps\to\overline{q}$ as
$\eps\to 0$. Then another concentration point is $-\overline{q}$.

Consider the following problem in a ball 
$B_\rho=\{x\in\R^{m+1} : |x|\le \rho\}$, for some $\rho>0$.
  \begin{equation}
  \begin{array}{lll}
  -\Delta u + u = u^p,  &\mbox{ \qquad in } B_\rho \label{CpE}\\
  u>0   &\mbox{ \qquad in }  B_\rho \\
  u = 0  &\mbox{   \qquad on } \pl B_\rho\\
  \end{array}
 \end{equation}
This functional has a least positive critical value, denoted by $c_\rho$ , which can be characterized similarly to (\ref{MP3}). Using Schwarz’s 
symmetrization, we find at least one radially symmetric least energy solution of (\ref{ELe}).
\begin{lemma}
 \be c_\rho=c^*+exp[-2\rho(1+o(1))],\notag\ee where $c^*$ is given in (\ref{ELe})
\end{lemma}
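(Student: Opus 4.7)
The proof is a matched asymptotic expansion exploiting the exponential decay of the entire-space ground state $U$. Recall that $U$ is radial with $U(r) = C r^{-m/2} e^{-r}(1 + o(1))$ and $U'(r) = -C r^{-m/2} e^{-r}(1+o(1))$ as $r \to \infty$, so $U(\rho)^2 = e^{-2\rho(1+o(1))}$; this is the quantity both bounds must reproduce. Let $J_\rho$ denote the energy functional associated with (\ref{CpE}).

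For the upper bound, the plan is to build a test function in $H_0^1(B_\rho)$ by correcting $U$ to satisfy the Dirichlet condition. I would set $U_\rho := U - w_\rho$, where $w_\rho$ solves the linear auxiliary problem $-\Delta w + w = 0$ in $B_\rho$ with $w = U$ on $\partial B_\rho$. By the maximum principle and standard barrier estimates, $w_\rho = O(U(\rho))$ in $L^\infty(B_\rho)$ and $\|w_\rho\|_{H^1(B_\rho)}^2 = O(U(\rho)^2)$. Inserting $t\, U_\rho$ into the Nehari-type maximization analogous to (\ref{ELe}) and optimizing over $t>0$, the cross terms simplify via the equation for $U$ to give
\[
c_\rho \;\le\; \max_{t>0} J_\rho(t U_\rho) \;=\; J(U) + O(U(\rho)^2) \;=\; c^* + e^{-2\rho(1+o(1))}.
\]

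For the lower bound, Schwarz symmetrization yields a radial, radially decreasing least-energy solution $u_\rho$ of (\ref{CpE}). A concentration-compactness argument, combined with uniqueness of the positive radial ground state of (\ref{LE}), shows that $u_\rho$ (extended by zero to $\R^{m+1}$) converges to $U$ strongly in $H^1$, whence $c_\rho \to c^*$. To extract the precise rate, I would compare $u_\rho$ and $U$ on $B_\rho$: writing $\phi_\rho := u_\rho - U$ and integrating the equation for $\phi_\rho$ by parts on $B_\rho$, the surviving boundary term is $-\int_{\partial B_\rho} U \, \partial_\nu U \, dS$. By the radial ODE for $U$ this flux equals $|\partial B_\rho|\, U(\rho)|U'(\rho)|(1+o(1))$; the polynomial prefactors from the decay estimates and the surface measure are absorbed into the $o(1)$ in the exponent, producing $c_\rho \ge c^* + e^{-2\rho(1+o(1))}$.

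The principal obstacle is the lower bound, since the naive inequality $c_\rho \ge c^*$ is far too crude. One needs precise control of $u_\rho - U$ near $\partial B_\rho$ together with a non-degeneracy statement for the linearization at $U$, so that the boundary-flux computation is not swamped by lower-order cancellations. Following the strategy of Byeon--Park \cite{MR2180862}, this is accomplished by radial ODE shooting paired with exponentially-weighted energy estimates for the linearized operator $-\Delta + 1 - p U^{p-1}$, which force $\phi_\rho$ to behave like the decaying radial eigenfunction of the linearization and thereby identify the leading boundary contribution unambiguously.
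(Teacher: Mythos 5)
The paper does not prove this lemma at all: the given proof is the single line ``The proof can be found in Lemma 2.1 in \cite{MR1736974}'' (Del Pino--Felmer), so there is no argument in the paper to compare against. Your reconstruction has the right overall structure that such a proof must have --- upper bound by a boundary-corrected truncation of $U$, lower bound by the $H^1$-convergence $u_\rho\to U$ promoted to a rate via nondegeneracy of $U$ --- and the decay asymptotics $U(r)\sim C r^{-m/2}e^{-r}$ in $\R^{m+1}$ that drive the $e^{-2\rho(1+o(1))}$ scale are stated correctly. Note, though, that you attribute the lower-bound strategy to Byeon--Park \cite{MR2180862}; the paper's source for this lemma is Del Pino--Felmer \cite{MR1736974}.

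Two substantive caveats on your lower-bound sketch, which you yourself flag as the principal obstacle. First, the boundary-flux term $-\int_{\partial B_\rho}U\,\partial_\nu U\,dS$ is indeed the first-order term when you expand $J_\rho(u_\rho)=J_\rho(U+\phi_\rho)$ around $U$; but the zeroth-order term $J_\rho(U)$ differs from $c^*$ by the \emph{negative} exterior tail of the energy of $U$, and (using $-\Delta U+U=U^p$ on the exterior) one finds that this tail partially cancels the boundary flux: $J_\rho(U)=c^*+\tfrac12\int_{\partial B_\rho}U\,\partial_\nu U\,dS-(\tfrac12-\tfrac1{p+1})\int_{\R^{m+1}\setminus B_\rho}U^{p+1}$. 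So the flux term alone is not the answer; the net first-order contribution is $-\tfrac12\int_{\partial B_\rho}U\,\partial_\nu U\,dS$, and the $U^{p+1}$-tail is of smaller order because $p>1$. Second, and more seriously, this formal expansion is not yet a lower bound because the quadratic remainder $\tfrac12 D^2J_\rho(U)[\phi_\rho,\phi_\rho]$ is indefinite; one genuinely needs the nondegeneracy of $U$ to rule out cancellation. The cleanest way to package this, and probably closest to the cited source, is via the Nehari manifold: $u_\rho$ extended by zero lies on the Nehari manifold of the whole-space functional $J$, nondegeneracy yields $J(v)-c^*\gtrsim \operatorname{dist}_{H^1}\bigl(v,\{U(\cdot-y)\}_y\bigr)^2$ for Nehari elements near the ground-state orbit, and since $u_\rho\equiv0$ off $B_\rho$ while $U$ does not, $\operatorname{dist}_{H^1}(u_\rho,\{U(\cdot-y)\})^2\gtrsim e^{-2\rho(1+o(1))}$. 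Your sketch gestures at all these ingredients but does not assemble them; as written it is a plausible outline rather than a proof.
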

\begin{proof}
 The proof can be found in lemma 2.1 in \cite{MR1736974}.
\end{proof}

We shall now use the previous lemma to give the upper and lower bound for $c_\eps$.

Let $U_\rho$ be a corresponding radial least energy solution to (\ref{CpE}). Define 
\be U_\rho^\eps=U_\rho(\eps(x-\overline{q}))+U_\rho(\eps(x+\overline{q}))\notag\ee Then we see that for any $\rho>0$ there is an $\eps_\rho$ such that 
$U_\rho^\eps\in H_\sharp$ for all $\eps<\eps_\rho$ and support of $U_\rho(\eps(x-\overline{q}))$ and $U_\rho(\eps(x+\overline{q}))$ are disjoint. 
Estimating as before we can have the energy at $U_\rho^\eps$ as 
\be J_\eps(U_\rho^\eps)=2\eps^3\int_{B_\rho(0)}\Big(\frac{1}{2}(|\grad U_\rho|^2+|U_\rho|^2)-\frac{1}{p+1}|U_\rho|^{p+1}\Big)=2\eps^3c_\rho\label{4.1}\ee
Hence we have \be c_\eps\le 2\eps^3c_\rho=2(c^*+exp[-2\rho(1+o(1))]),\label{4.2}\ee We can take $\rho=\frac{1}{\eps}dist(\overline{q},\pl \Om)$.
Then finally we have \be c_\eps\le 2\eps^3c_\rho=2\eps^3\Big(c^*+exp\Big[-\frac{2}{\eps}dist(\overline{q},\pl\Om)(1+o(1))\Big]\Big),\label{4.3}\ee

Next we shall estimate $c_\eps$ from below. Let \be d_\eps=dist(q_\eps,\pl \Om)=dist(-q_\eps,\pl \Om)\to d_0=dist(\bar{q},\pl \Om) \text{ as } \eps\to 0\notag\ee 
Given $\delta$ choose a number $d_0'>0$ such that \be vol(B(\bar{q},d_0'))=vol(\Om\cap B(\bar{q},d_0+\delta)).\notag\ee. Now take $\delta'>0$ 
slightly smaller than $\delta$ with $d_0'<d_0+\delta'$.

Let us consider a $C^\infty$ cutoff function $\eta_\eps(s)$ such that $\eta_\eps(s)=1$ for $0\le s\le d_\eps+\delta'$ and $\eta_\eps(s)=0$ for 
$s\ge d_\eps+\delta$, with $0\le \eta_\eps\le 1$ and with uniformly bounded derivative. Let us sate 
$\tilde{u}_\eps=u_\eps\eta_\eps(x-\bar{q})+u_\eps\eta_\eps(x+\bar{q})$. Note that the support of $u_\eps\eta_\eps(x-\bar{q})$ and 
$u_\eps\eta_\eps(x+\bar{q})$ are disjoint and we find that
\be c_\eps\ge J_\eps(tu_\eps)\ge J_\eps(t\tilde{u}_\eps)-2exp\Big[-\frac{c}{\eps}(d_\eps+\delta')\Big]\label{4.4}\ee for $\eps$ sufficiently small.
Here we have used the estimates in (\ref{TA1}). Now note that \be J_\eps(t\tilde{u}_\eps)= J_\eps(tu_\eps\eta_\eps(x-\bar{q}))+J_\eps(tu_\eps\eta_\eps(x+\bar{q}))\notag\ee 
So using the same analysis as in \cite{MR1736974} we can get $dist(q_\eps,\pl \Om)\to \max_{x\in \Om}dist(x,\pl \Om)$. That is $q_\eps$ concentrates at
$(0,0,\frac{a+b}{2})$.

\textbf{\textit{Proof of the Theorem\ref{Th0.1}}} : 
\begin{proof}
 Let $v_\eps$ be a non radial M-P solution of \ref{E0.2} at the critical level $c_\eps$ with
Morse Index less or equal to 1(actually it is 1 as $u_\eps$ being positive radial solution). Now from the above discussion we see that,the sequence
$\{v_\eps\}_\eps$ concentrates exactly at two points in $\Om$. Also we have got the co-ordinate of the those concentrating points as 
$(\frac{a+b}{2},0,0,\dots,0)$ and $(\frac{a+b}{2},\pi,0,\dots,0)$. Now corresponding to $v_\eps$ there is a solution $u_\eps$ of equation
(\ref{E0.1}). And we get that $\{u_\eps\}_\eps$ concentrates exactly at two $m-1$ dimensional spheres in $A$, placed at the angle $\theta=0$ and 
$\theta=\pi/2$ with the radial co-ordinate $(a+b)/2$ .
\end{proof}

\begin{remark}
 In this note we are not detailing the proof of Theorem\ref{Th0.2} since the aim of this note is just to present the basic idea involved. The 
 details of the proof can be found in the PHd. thesis of B.B.Manna which is under preparation.
\end{remark}

\end{document}